\newtheorem{theorem}{Theorem} 
\newtheorem{assumption}{Assumption}
\newtheorem{definition}{Definition}
\begin{document}
	
	\tikzstyle{dec} = [rectangle, draw, text width=7em, 
     text centered,  minimum height=4.5em]
	\tikzstyle{rea} = [rectangle, draw, 
	text width=5em, text centered, rounded corners, minimum height=4em]
	\tikzstyle{dotdot} = [rectangle,  
	text width=1em, text centered,  minimum height=1em]
	\tikzstyle{line} = [draw, -latex']
%
\title{The Value of Multi-stage Stochastic Programming in Risk-averse Unit Commitment under Uncertainty}
%
%
 
\author{Ali~\.{I}rfan~Mahmuto\u{g}ullar{\i},
	Shabbir~Ahmed,
	{\"O}zlem~{\c{C}}avu{\c{s}}
	and~M.~Selim~Akt{\"u}rk
	\thanks{A.~\.{I}.~Mahmuto\u{g}ullar{\i}, {\"O}.~{\c{C}}avu{\c{s}} and M.~S.~Akt{\"u}rk are with Department
		of Industrial Engineering, Bilkent University, Ankara, 06800, Turkey. e-mail: a.mahmutogullari@bilkent.edu.tr, ozlem.cavus@bilkent.edu.tr, akturk@bilkent.edu.tr}
	 \thanks{S.~Ahmed is with H. Milton Stewart School of Industrial and Systems Engineering, Georgia Institute of Technology, Atlanta, 30318, GA, USA. e-mail: sahmed@isye.gatech.edu}
	 \thanks{The first author is supported by the Scientific and Technological Research Council of Turkey (T\"{U}B\.{I}TAK) program number B\.{I}DEB-2214-A. The second author is supported by the National Science Foundation Grant 1633196.}
	\thanks{Manuscript submitted \today.}}

\maketitle

\begin{abstract}
Day-ahead scheduling of electricity generation or unit commitment is an important and challenging optimization problem in power systems. Variability in net load arising from the increasing penetration of renewable technologies have motivated study of various classes of stochastic 
unit commitment models. In two-stage models, the generation schedule for the entire day is fixed while the dispatch is adapted to the uncertainty, whereas in multi-stage models the generation schedule is also allowed to dynamically adapt to the uncertainty realization. Multi-stage models provide more flexibility in the generation schedule, however, they  require significantly higher computational effort than two-stage models. To justify this additional computational effort, we provide theoretical and empirical analyses of the \emph{value of multi-stage solution}  
for risk-averse multi-stage stochastic unit commitment models. The value of multi-stage solution measures the relative advantage of multi-stage solutions over their two-stage counterparts.
Our results indicate that, for unit commitment models, value of multi-stage solution increases with the level of uncertainty and number of periods, and decreases with the degree of risk aversion of the decision maker.

\end{abstract}


\begin{IEEEkeywords}
Unit commitment, risk-averse optimization, stochastic programming.
\end{IEEEkeywords}

%
\IEEEpeerreviewmaketitle

\section{Introduction} \label{sec:intro}
\IEEEPARstart{U}{}\emph{nit commitment} (UC) is a challenging optimization problem used for day-ahead generation scheduling given net load forecasts and various operational constraints~\cite{kazarlis1996genetic}. The output schedule includes on-off status of generators and the production amounts, called \emph{economic dispatch}~\cite{huang2017electrical}, for every time step.




There has been a great deal of research on  deterministic UC models where the problem parameters are assumed to be known exactly\cite{padhy2004unit}.  These models cannot capture variability and uncertainty. Common sources of uncertainty are departures from forecasts and unreliable equipment. The departures from forecasts generally stem from the variability in net load and production amounts, whereas unreliable equipment may result in generator and transmission line outages \cite{huang2017electrical}, \cite{ruiz2009uncertainty}. 
The penetration of renewable energy has increased the volatility of power systems in recent years. The production amount of energy from wind and solar power are not controllable but can only be forecasted \cite{brown2008optimization}.
 
\emph{Robust optimization} and \emph{stochastic programming} are two common frameworks used to address the uncertainty in UC problems. In robust optimization models, it is assumed that the uncertain parameters take values in some uncertainty sets and the objective is to minimize the worst case cost (cf. \cite{bertsimas2013adaptive}, \cite{lorca2017multistage}, \cite{zhao2012robust}, \cite{jiang2012robust} and \cite{wang2013two}). In stochastic programming models, the uncertainty is represented by a probability distribution (cf. \cite{cheung2015toward}, \cite{tahanan2015}, \cite{papavasiliou2015applying}, \cite{wang2008security} and \cite{takriti1996stochastic}). 
 In \textit{two-stage} stochastic programming UC models, the generation schedule is fixed for the entire day before the beginning of the day while dispatch is adapted to uncertainty as in \cite{caroe1998two,wang2012chance} and  \cite{zheng2013decomposition}. On the other hand, in \textit{multi-stage} stochastic programming UC models both the generation schedule and dispatch are allowed to dynamically adapt to uncertainty realization at each hour (see for example, \cite{takriti1996stochastic,takayuki2004stochastic} and \cite{jiang2016cutting}). Therefore, they incorporate multistage forecasting information with varying accuracy and express relation between time periods appropriately. However, in general, the multi-stage models are computationally difficult. A detailed comparison of two- and multi-stage models can be found in \cite{zheng2015stochastic} and \cite{lorca2016multistage}.

The computational challenge of multi-stage models motivates the question on whether the effort to solve them is worthwhile. In \cite{huang2009value}, this question is addressed for a risk-neutral stochastic capacity planning problem. In the present paper, we address this question for risk-averse UC (RA-UC) problems where the objective is a dynamic measure of risk. We provide theoretical and empirical analysis on the value of the \textit{multi stage solution} (VMS) where VMS measures the relative advantage to solve the multi-stage models over their two-stage counterparts. 
 
The rest of the paper is organized as follows: In Section \ref{sec:problem}, we define the RA-UC problem and present two- and multi-stage stochastic models. In Section \ref{sec:vms}, we define VMS and provide analytical bounds for it. In Section \ref{sec:computation}, we present results of  computational experiments. In Section \ref{sec:conclusion}, we discuss possible future extensions of the current work.

\section{Risk-averse Unit Commitment Problem} \label{sec:problem}
\subsection{Deterministic UC formulation}
We first present an abstract deterministic formulation of the UC problem. Let $I$ be the number of generators and $T$ be the number of periods. Also, let $\mathcal{I} := \{1,\ldots,I\}$ and $\mathcal{T} := \{1,\ldots,T\}$ be the sets of generators and time periods, respectively. A formulation of the UC problem is as follows:
\begin{align}
\text{min} \; & \sum_{t=1}^T f_t(\boldsymbol{u}_t, \boldsymbol{v}_t, \boldsymbol{w}_t) \label{det-obj}\\
\text{s.t.} \; &  \sum_{i = 1}^I v_{it} \geq d_t, \; \forall t \in \mathcal{T}  \label{det-dem} \\ 
& \underline{q}_i u_{it} \leq v_{it} \leq \overline{q}_i u_{it}, \; \forall i \in \mathcal{I}, t \in \mathcal{T}  \label{det-cap} \\
& (\boldsymbol{u}_{1},\boldsymbol{v}_{1},\boldsymbol{w}_{1}) \in \mathcal{X}_1, \label{det-set1} \\
& (\boldsymbol{u}_{t},\boldsymbol{v}_{t},\boldsymbol{w}_{t}) \in \mathcal{X}_t(\boldsymbol{u}_{t-1},\boldsymbol{v}_{t-1},\boldsymbol{w}_{t-1}), \;  \forall t \in \mathcal{T} \setminus \{1\} \label{det-set} \\
& \boldsymbol{u}_t \in \{0,1\}^I, \boldsymbol{v}_t \in \mathbb{R}_+^I, \boldsymbol{w}_t \in \mathbb{R}^k, \; \forall t \in \mathcal{T}  \label{det-dom} 
\end{align}
Decision variables $u_{it}$ and $v_{it}$ represent the binary on/off status and production of generator $i \in \mathcal{I}$ in period $t \in \mathcal{T}$, respectively. The bold symbols $\boldsymbol{u}_t := (u_{1t},u_{2t},\ldots,u_{It})$ and $\boldsymbol{v}_t := (v_{1t},v_{2t},\ldots,v_{It})$ are the vectors of status and production decisions in period $t \in \mathcal{T}$, respectively. The vector $\boldsymbol{w}_t$ denotes auxiliary variables associated with  period $t \in \mathcal{T}$. These variables are used model various operational constraints.
The objective (\ref{det-obj})  is the sum of production, start-up and shut-down costs in all periods. The function $f_t(\cdot)$  represents the total cost in a period $t \in \mathcal{T}$. Constraint (\ref{det-dem}) ensures satisfaction of the power demand. Constraint (\ref{det-cap}) enforces lower and upper production limits on the generators. Other operational restrictions are represented by constraints (\ref{det-set1}) and  (\ref{det-set}). The  temporal relationship between consecutive periods such as start-up, rump-up, shut-down and rump-down restrictions are modeled  by the set constraint (\ref{det-set}). Domain restrictions of the decision variables are given by constraint (\ref{det-dom}). A concrete version of the above abstract formulation is presented in Appendix \ref{app:model}.


\subsection{Uncertainty and Risk models}
In the deterministic formulation above, net load values are assumed to be known exactly. This is a restrictive assumption in practice. We assume that the net load is random and denoted by a random variable $\widetilde{d}_t$ in period $t \in \mathcal{T}$ from a probability space $(\Omega,\mathcal{F},P)$. Here $\Omega$ is a sample space equipped with sigma algebra $\mathcal{F}$ and probability measure $P$. An element of the sample space $\Omega$ is called as a \textit{scenario} (or a sample path) and represents a possible realization of the net load values in all periods. The sequence of sigma algebras $ \{\emptyset,\Omega\}=\mathcal{F}_1 \subseteq \mathcal{F}_2 \subseteq \cdots \subseteq \mathcal{F}_T = \mathcal{F}$ is  called as a \textit{filtration} and it represents the gradually increasing information through the decision horizon $1,2,\ldots,T$. The set of $\mathcal{F}_t-$measurable random variables is denoted by $\mathcal{Z}_t$ for $t \in \mathcal{T}$.  The random demand $\widetilde{d}_t$ in period $t$ is $\mathcal{F}_t-$measurable, that is $\widetilde{d}_t \in \mathcal{Z}_t$ for $t \in \mathcal{T}$. Note that since $\mathcal{F}_1 = \{\emptyset,\Omega\}$ by definition, $\mathcal{Z}_1 = \mathbb{R}$ and the demand in the first period is deterministic. 

To extend the deterministic UC model to this ucertainty setting, we have that  the  decisions in period $t$ to depend on realization of the history of net load process $\widetilde{d}_{[t]} := (\widetilde{d}_{1},\ldots,\widetilde{d}_{t})$ up to period $t$. Therefore, we use the $\mathcal{F}_t-$measurable vectors $\widetilde{\boldsymbol{u}}_t(\widetilde{d}_{[t]})$, $\widetilde{\boldsymbol{v}}_t(\widetilde{d}_{[t]})$ and $\widetilde{\boldsymbol{w}}_t(\widetilde{d}_{[t]})$ to represent status, production and auxiliary decisions in period $t \in \mathcal{T}$, respectively. 
The total cost at period $t$ is also $\mathcal{F}_t-$measurable, i.e., $f_t(\widetilde{\boldsymbol{u}}_t(\widetilde{d}_{[t]}), \widetilde{\boldsymbol{v}}_t(\widetilde{d}_{[t]}), \widetilde{\boldsymbol{w}}_t(\widetilde{d}_{[t]}))\in \mathcal{Z}_t$. We use conditional risk measures in order to quantify the risk involved in a random cost at period $t+1$ based on the available informations at period $t$ for $t \in  \mathcal{T}\setminus\{T\}$. The mapping $\rho_t : \mathcal{Z}_{t+1} \rightarrow \mathcal{Z}_t$ is called a \textit{conditional risk measure} if it satisfies the following four axioms of coherent risk measures (the subscript $t$ is suppressed for notational brevity): 
\begin{itemize}
	\item[(A1)] \emph{Convexity}: $\rho(\alpha Z + (1-\alpha)W) \leq \alpha \rho(Z) + (1-\alpha)\rho(W)$ for all $Z,W \in \mathcal{Z}$ and $\alpha \in [0,1]$,
	\item[(A2)] \emph{Monotonicity}: $Z \succeq W$ implies $\rho(Z) \geq \rho(W)$ for all $Z,W \in \mathcal{Z}$,
	\item[(A3)] \emph{Translational Equivariance}: $\rho(Z + c) = \rho(Z) + c$ for all $c \in \mathbb{R}$ and $Z \in \mathcal{Z}$,
	\item[(A4)] \emph{Positive Homogeneity}: $\rho(cZ) = c\rho(Z)$ for all $c > 0$ and $Z \in \mathcal{Z}$,
\end{itemize}
where $Z \succeq W$ indicates point-wise partial ordering defined on set  $\mathcal{Z}$.
See \cite{artzner1999coherent} and \cite{shapiro2009lectures} for a detailed discussions on coherent  and conditional risk measures. An example of a conditional risk measure is the \textit{conditional mean-upper semi deviation} 
\begin{equation}\label{musd}
  \rho_t(Z_{t+1}) = \mathbb{E}[Z_{t+1}|\mathcal{F}_t] + \lambda\mathbb{E}[(Z_{t+1}-\mathbb{E}[Z_{t+1}|\mathcal{F}_t])_+|\mathcal{F}_t], 
\end{equation}
where $\mathbb{E}[\cdot|\mathcal{F}_t]$ is the conditional expectation with respect to the sigma algebra $\mathcal{F}_t$, $\lambda \in [0,1]$ is a parameter controlling the degree of risk aversion and $(\cdot)_{+}$ is the positive part function for all $Z_{t+1} \in \mathcal{Z}_{t+1}$. 

The objective of the risk averse UC (RA-UC) problem is to minimize the risk involved with the cost sequence $\{Z_t\}_{t=1}^T$ where $Z_t := f_t(\widetilde{\boldsymbol{u}}_t(\widetilde{d}_{[t]}), \widetilde{\boldsymbol{v}}_t(\widetilde{d}_{[t]}), \widetilde{\boldsymbol{w}}_t(\widetilde{d}_{[t]}))$ is a shorthand notation for the total cost in period $t \in \mathcal{T}$. Thus, as in \cite{collado2012scenario,shapiro2009lectures}, we define the dynamic coherent risk measure $\varrho: \mathcal{Z}_1 \times \mathcal{Z}_2 \times \cdots \times \mathcal{Z}_T \rightarrow \mathbb{R}$ by using nested composition of the conditional risk measures $\rho_{1}(\cdot),\rho_{2}(\cdot),\ldots,\rho_{T-1}(\cdot)$, that is,
\begin{equation*} 
\varrho(Z_1,Z_2,\ldots,Z_T) :=   Z_1 + \rho_1(Z_2     + \cdots \rho_{T-1}(Z_T) \cdots )
\end{equation*}
is the risk associated with this cost sequence.  Due to translational equivariance property of conditional risk measures, we have an alternative representation of the dynamic coherent measure of risk $\varrho(\cdot)$ as
\begin{equation}\label{rhobar} 
 \rho\left(\sum_{t=1}^T Z_t \right) :=  
 \varrho(Z_1,Z_2,\ldots,Z_T) 
\end{equation}
where  $\rho = \rho_1 \circ \rho_2 \circ \cdots \circ \rho_{T-1} : \mathcal{Z} \rightarrow \mathbb{R}$ is called as a \emph{composite risk measure} and $\mathcal{Z} := \mathcal{Z}_T$. The composite risk measure  $\rho(\cdot)$ satisfies the coherence axioms (A1)-(A4).
 Therefore, $\rho(\cdot)$ is a coherent risk measure as shown in \cite[Eqn. 6.234]{shapiro2009lectures}.

\subsection{Two-stage and Multi-stage models}
We consider two different models for the RA-UC problem. In the \textit{two-stage model}, the on/off status decisions are fixed at the beginning of the day and production (or dispatch) decisions are adapted to uncertainty in the random demand. On the other hand, in the \textit{multi-stage model}, both the status and production decisions are fully adapted to uncertainty in net load. In order to clarify the distinction between two models, the decision dynamics in the two- and multi-stage models are depicted as in \figurename{ \ref{fig:decpro2}} and \figurename{ \ref{fig:decprom}}, respectively. 
\begin{figure*}[h!]
	\centering				  
	\caption{Order of decisions in the two-stage model. \label{fig:decpro2}}
	\resizebox{0.6\textwidth}{!}{
		\begin{tikzpicture}[node distance = 2cm, auto]
		\node[dec](d1){Decide \\ $\{\boldsymbol{u}_t\}_{t=1}^T,\boldsymbol{v}_1,\boldsymbol{w}_1$};
		\node [rea, right of=d1 , node distance=3cm] (r1) {Observe \\ $\widetilde{d}_2$};
		\node [dec, right of=r1 , node distance=3cm] (d2) {Decide \\ $\boldsymbol{v}_2,\boldsymbol{w}_2$};
		\node [dotdot,right of=d2, node distance=2cm] (dot){...};
		\node [rea, right of=dot , node distance=2cm] (rT) {Observe \\ $\widetilde{d}_T$};
		\node [dec, right of=rT , node distance=3cm] (dT) {Decide \\ $\boldsymbol{v}_T,\boldsymbol{w}_T$};
		\path [line] (d1) -- (r1);
		\path [line] (r1) -- (d2);
		\path [line] (d2) -- (dot);
		\path [line] (dot) -- (rT);
		\path [line] (rT) -- (dT);
		\end{tikzpicture}
	}
\end{figure*}
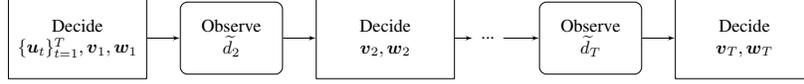
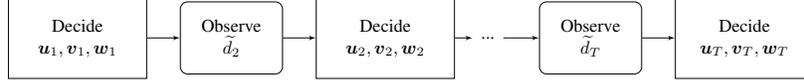
\begin{figure*}[h!]
	\centering
	\caption{Order of decisions in the multi-stage model. \label{fig:decprom}}
	\resizebox{0.6\textwidth}{!}{
		\begin{tikzpicture}[node distance = 2cm, auto]
		\node[dec](d1){Decide \\ $\boldsymbol{u}_1,\boldsymbol{v}_1,\boldsymbol{w}_1$};
		\node [rea, right of=d1 , node distance=3cm] (r1) {Observe \\ $\widetilde{d}_2$};
		\node [dec, right of=r1 , node distance=3cm] (d2) {Decide \\ $\boldsymbol{u}_2,\boldsymbol{v}_2,\boldsymbol{w}_2$};
		\node [dotdot,right of=d2, node distance=2cm] (dot){...};
		\node [rea, right of=dot , node distance=2cm] (rT) {Observe \\ $\widetilde{d}_T$};
		\node [dec, right of=rT , node distance=3cm] (dT) {Decide \\ $\boldsymbol{u}_T,\boldsymbol{v}_T,\boldsymbol{w}_T$};
		\path [line] (d1) -- (r1);
		\path [line] (r1) -- (d2);
		\path [line] (d2) -- (dot);
		\path [line] (dot) -- (rT);
		\path [line] (rT) -- (dT);
		\end{tikzpicture}
	}
\end{figure*}

The two-stage model (TS) for the RA-UC problem is given as
\begin{align}
\text{min} \; &\rho\left[\sum_{t=1}^T f_t(\boldsymbol{u}_t, \widetilde{\boldsymbol{v}}_t(\widetilde{d}_{[t]}), \widetilde{\boldsymbol{w}}_t(\widetilde{d}_{[t]}))\right] \label{two-obj}   \\
\text{s.t.} \; &  \sum_{i \in \mathcal{I}} \widetilde{v}_{it}(\widetilde{d}_{[t]}) \geq \widetilde{d}_t, \; \forall t \in \mathcal{T}  \label{two-dem}  \\ 
& \underline{q}_i u_{it} \leq \widetilde{v}_{it}(\widetilde{d}_{[t]}) \leq \overline{q}_i u_{it}, \; \forall i \in \mathcal{I}, t \in \mathcal{T}  \label{two-cap} \\
&(\boldsymbol{u}_{1},\boldsymbol{v}_{1},\boldsymbol{w}_{1}) \in \mathcal{X}_1 \label{two-set1} \\
& (\boldsymbol{u}_{t},\widetilde{\boldsymbol{v}}_{t}(\widetilde{d}_{[t]}),\widetilde{\boldsymbol{w}}_{t}(\widetilde{d}_{[t]})) \in   \nonumber\\
&  \mathcal{X}_t( \boldsymbol{u}_{t-1},\widetilde{\boldsymbol{v}}_{t-1}(\widetilde{d}_{[t-1]}),\widetilde{\boldsymbol{w}}_{t-1}(\widetilde{d}_{[t-1]}), \widetilde{d}_{[t]}), \; \forall t \in \mathcal{T} \setminus \{1\} \label{two-set} \\
& \boldsymbol{u}_t \in \{0,1\}^I, \widetilde{\boldsymbol{v}}_t(\widetilde{d}_{[t]}) \in \mathbb{R}_+^I, \widetilde{\boldsymbol{w}}_t(\widetilde{d}_{[t]}) \in \mathbb{R}^k, \; \forall t \in \mathcal{T}  \label{two-dom} 
\end{align}
The objective (\ref{two-obj}) of TS is the composite risk measure defined in (\ref{rhobar}) applied to the total cost sequence. The  inequalities  (\ref{two-dem}) and (\ref{two-cap}) are analogous to the constraints (\ref{det-dem}) and (\ref{det-cap}), respectively. The set constraint (\ref{two-set1}) is identical to (\ref{det-set1}) since the net load in the first period is deterministic. In constraint (\ref{two-set}), $\mathcal{X}_t$ is an $\mathcal{F}_t-$measurable feasibility set. The domain constraint (\ref{two-dom}) states that only production and auxiliary decisions depend on the demand history and the status decisions are deterministic. However, in the multi-stage model of the RA-UC problem, all decisions are made based on the history. Hence, the multi-stage model (MS) can be written as  
\begin{align}
\text{min} \; &\rho\left[\sum_{t=1}^T f_t(\widetilde{\boldsymbol{u}}_t(\widetilde{d}_{[t]}), \widetilde{\boldsymbol{v}}_t(\widetilde{d}_{[t]}), \widetilde{\boldsymbol{w}}_t(\widetilde{d}_{[t]}))\right] \label{mul-obj}   \\ 
\text{s.t.} \; &  \sum_{i \in \mathcal{I}} \widetilde{v}_{it}(\widetilde{d}_{[t]}) \geq \widetilde{d}_t, \; \forall t \in \mathcal{T}  \label{mul-dem}  \\ 
\text{s.t.} \;  & \underline{q}_i \widetilde{u}_{it}(\widetilde{d}_{[t]}) \leq \widetilde{v}_{it}(\widetilde{d}_{[t]}) \leq \overline{q}_i \widetilde{u}_{it}(\widetilde{d}_{[t]}), \; \forall i \in \mathcal{I}, t \in \mathcal{T}  \label{mul-cap} \\
&(\boldsymbol{u}_{1},\boldsymbol{v}_{1},\boldsymbol{w}_{1}) \in \mathcal{X}_1 \label{mul-set1} \\ 
& (\widetilde{\boldsymbol{u}}_{t}(\widetilde{d}_{[t]}),\widetilde{\boldsymbol{v}}_{t}(\widetilde{d}_{[t]}),\widetilde{\boldsymbol{w}}_{t}(\widetilde{d}_{[t]})) \in   \nonumber\\
&  \mathcal{X}_t( \widetilde{\boldsymbol{u}}_{t-1}(\widetilde{d}_{[t-1]}),\widetilde{\boldsymbol{v}}_{t-1}(\widetilde{d}_{[t-1]}),\widetilde{\boldsymbol{w}}_{t-1}(\widetilde{d}_{[t-1]}), \widetilde{d}_{[t]}),  \nonumber \\
& \forall t \in \mathcal{T} \setminus \{1\} \label{mul-set} \\
& \widetilde{\boldsymbol{u}}_t(\widetilde{d}_{[t]})  \in \{0,1\}^I, \widetilde{\boldsymbol{v}}_t(\widetilde{d}_{[t]}) \in \mathbb{R}_+^I, \widetilde{\boldsymbol{w}}_t(\widetilde{d}_{[t]}) \in \mathbb{R}^k,\nonumber \\
&  \forall t \in \mathcal{T}  \label{mul-dom}  
\end{align}
Note that the multi-stage model MS is identical with TS except the status decisions are fully adaptive to the random net load process. 

An optimal solution of either TS and MS is a policy that minimizes the value of the dynamic coherent risk measure. Both in TS and MS, the optimality of a policy should only be with respect to possible future realizations given the available information at the time when the decision is made. This principle is called as \emph{time consistency}. In \cite[Example 2]{shapiro2009time}, it is shown that time consistency enables us to use the composite risk measure in minimization among all possible decisions instead of nested minimizations in a dynamic coherent measure of risk. 

\section{Value of The Multi-stage Solution} \label{sec:vms}
Although an optimal solution of MS provides more flexible day-ahead schedule with respect to different realizations of parameters, the number of binary variables in MS is proportional to $\mathcal{N} \times I$ where $\mathcal{N}$ is the number of possible demand realizations in all periods if $\Omega$ is finite. However, the number of binary variables in TS is proportional to $T \times I$. Since $\mathcal{N} >> T$ for any non-trivial problem, computational difficulty of MS is significantly more than TS. Therefore, it is important to figure out if the additional effort to solve MS is worthwhile. We define the VMS in order to quantify the relative advantage of the multi-stage solution over their two-stage counterparts.
\begin{definition}
	The value of multi-stage solution (VMS) is the difference between the optimal values of TS and MS, that is, $\text{VMS} = z^{TS}-z^{MS}$ where $z^{TS}$ and $z^{MS}$ are the optimal values of TS and MS, respectively.
\end{definition} 
Since an optimal solution of MS provides more flexibility in status decisions with respect to uncertain net load realizations, we have $z^{TS} \geq z^{MS}$ and therefore  $\text{VMS} \geq 0$. Next we provide theoretical bounds on the VMS under some assumptions. 

\begin{assumption}\label{as:recourse}
	There exists a generator $j^* \in \mathcal{I}$  such that $\underline{q}_{j^*}\leq \widetilde{d}_t \leq \overline{q}_{j^*}$ with probability 1 and with no minimum start up and shut down time for each  $t \in \mathcal{T}$. 
\end{assumption}

\begin{assumption}\label{as:boundeddemand}
	There exists an upper bound  $d_t^{\max} \in \mathbb{R}_+$ on the net load values such that  $0 \leq \widetilde{d}_t \leq d_t^{\max}$ with  probability 1 for each $t \in \mathcal{T}$.
\end{assumption}

\begin{assumption}\label{as:linearcost}
The production cost of the each generator $i \in \mathcal{I}$ is linear and stationary, and there are no start-up and shut-down costs. In this case the total cost function in each period is of the form
$ f_t(\boldsymbol{u}_t, \boldsymbol{v}_t, \boldsymbol{w}_t) =  \sum_{i \in \mathcal{I}} (a_{i} u_{it} + b_i v_{it})$
for some positive coefficients $a_i$ and $b_i$ for all $i \in \mathcal{I}$.
\end{assumption}

Assumption \ref{as:recourse} ensures that TS and MS always have at least one feasible solutions and therefore both problems have \textit{complete recourse}.  Assumption \ref{as:boundeddemand} states that the net load in each period is bounded.  We also define $\widetilde{D} := \sum_{t =1}^T \widetilde{d}_t$ as the total net load  and $D^{\max} := \sum_{t =1}^T d^{\max}_t$ as an upper bound on $\widetilde{D}$. The above assumptions are somewhat restrictive but necessary for the analytical result next. In Section \ref{sec:computation}, we will provide numerical results 
showing that the analytical results hold even without these assumptions.

\begin{theorem}\label{thepro}
Under Assumptions 1, 2 and 3 we have that 
	\begin{equation*}
		\alpha_*D^{\max} - \alpha^* \rho( \widetilde{D}) \leq \text{ \emph{VMS} } \leq \alpha^* D^{\max} - \alpha_* \rho( \widetilde{D}).
	\end{equation*}
where
	\begin{align}
	\alpha_{*} &:= \underset{i \in \mathcal{I}}{\min} \left\lbrace a_{i} +b_{i}\underline{q}_i\right\rbrace \Big /     \underset{i \in \mathcal{I}}{\max} \left\lbrace\overline{q}_i\right\rbrace \text{ and} \nonumber \\ 
	\alpha^{*} &:= \underset{i \in \mathcal{I}}{\max} \left\lbrace a_{i} +b_{i}\overline{q}_i\right\rbrace \Big /     \underset{i \in \mathcal{I}}{\min} \left\lbrace\underline{q}_i\right\rbrace \nonumber \nonumber 
	\end{align}
	are cost related problem parameters. 
\end{theorem}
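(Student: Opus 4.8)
The plan is to bound VMS $= z^{TS} - z^{MS}$ from both sides by constructing explicit feasible policies and exploiting the structure granted by Assumptions 1--3. The key observation, coming from Assumption~3, is that under a linear stationary cost with no start-up/shut-down costs, the cost in period $t$ for any feasible dispatch is squeezed between a multiple of the demand from below and a multiple of the demand from above. Concretely, if $(\boldsymbol{u}_t,\boldsymbol{v}_t,\boldsymbol{w}_t)$ is feasible, then using $\underline{q}_i u_{it}\le v_{it}\le \overline{q}_i u_{it}$ one gets, per active generator, $a_i u_{it} + b_i v_{it} \ge (a_i + b_i\underline q_i)u_{it} \ge \alpha_* \,\overline q_i u_{it}$, and summing over $i$ together with the demand constraint $\sum_i v_{it}\ge \widetilde d_t$ yields $f_t(\cdot)\ge \alpha_* \widetilde d_t$; symmetrically, a judicious dispatch (e.g.\ using the single ``swing'' generator $j^*$ from Assumption~1) gives $f_t(\cdot)\le \alpha^* \widetilde d_t$ when the demand is met exactly with minimal committed capacity. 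Pushing these bounds through the sum over $t$ and through the composite risk measure $\rho$ --- using monotonicity (A2) and positive homogeneity (A4) --- gives, for \emph{any} feasible policy of either TS or MS, the two-sided estimate $\alpha_* \rho(\widetilde D) \le (\text{cost}) \le \alpha^* \rho(\widetilde D)$ on the objective value. Hence $\alpha_*\rho(\widetilde D)\le z^{MS}\le z^{TS}\le \alpha^*\rho(\widetilde D)$, which already gives VMS $\le \alpha^*\rho(\widetilde D) - \alpha_*\rho(\widetilde D)$.

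To sharpen this into the stated bounds involving $D^{\max}$, I would treat $z^{TS}$ and $z^{MS}$ asymmetrically. For the \emph{upper} bound on VMS, I need a good (cheap) feasible policy for MS and a crude upper bound on $z^{TS}$. The cheapest conceivable multi-stage behavior commits, in each realized period, only the generator $j^*$ (feasible by Assumption~1, since $\underline q_{j^*}\le \widetilde d_t\le \overline q_{j^*}$ and there are no min up/down times), so $z^{MS}\ge \alpha_*\rho(\widetilde D)$ is the relevant side; meanwhile a feasible two-stage policy can commit a fixed set of generators sized against the worst case $d_t^{\max}$, which costs at most $\alpha^* D^{\max}$ deterministically, so $z^{TS}\le \alpha^* D^{\max}$ --- note this uses that the objective of a deterministic cost sequence equals its sum by (A3), i.e.\ $\rho$ of a constant is that constant. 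Subtracting gives VMS $\le \alpha^* D^{\max} - \alpha_*\rho(\widetilde D)$. For the \emph{lower} bound, I reverse the roles: $z^{TS}\ge \alpha_* D^{\max}$? --- no; instead I use $z^{MS}\le \alpha^*\rho(\widetilde D)$ (any MS policy meeting demand with the swing generator costs at most $\alpha^*$ per unit of realized demand) together with a lower bound $z^{TS}\ge \alpha_* D^{\max}$, which should follow because in TS the commitment $\boldsymbol u$ is fixed before the horizon and must therefore cover $d_t^{\max}$ in each period to remain feasible under complete recourse, forcing at least $\alpha_* D^{\max}$ of committed-capacity cost regardless of the realization, hence after applying $\rho$ (monotone, translation-equivariant on this deterministic lower envelope) we keep $z^{TS}\ge \alpha_* D^{\max}$. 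Subtracting yields VMS $\ge \alpha_* D^{\max} - \alpha^*\rho(\widetilde D)$.

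The main obstacle I anticipate is the claim that a feasible two-stage commitment is forced to hedge against $d_t^{\max}$ in \emph{every} period --- this is where Assumptions~1 and 2 have to be combined carefully. Complete recourse (Assumption~1) guarantees feasibility of \emph{some} two-stage policy, but to get the lower bound $z^{TS}\ge \alpha_* D^{\max}$ I must argue that \emph{every} feasible two-stage commitment has enough capacity on in period $t$ to serve demand up to $d_t^{\max}$ with positive probability (Assumption~2 gives $\widetilde d_t$ can be as large as $d_t^{\max}$, or at least arbitrarily close), so $\sum_i \overline q_i u_{it}\ge d_t^{\max}$ for the supremum of the support, and then lower-bounding the per-period active cost by $\alpha_*\sum_i \overline q_i u_{it}\ge \alpha_* d_t^{\max}$. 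Care is needed about whether $d_t^{\max}$ is attained or only approached; if only approached, one takes a limiting/essential-supremum argument, and the monotonicity axiom (A2) together with the fact that $\rho$ applied to a constant returns that constant lets the bound survive the risk operator. The remaining steps --- distributing $\rho$ over sums via the decomposition $\rho(\sum_t Z_t)=\varrho(Z_1,\dots,Z_T)$ recalled in the paper, and pulling scalars out by (A4) --- are routine and I would not belabor them.
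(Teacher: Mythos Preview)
Your proposal is correct and follows essentially the same route as the paper: derive $\alpha_*\rho(\widetilde D)\le z^{MS}\le \alpha^*\rho(\widetilde D)$ by (i) lower-bounding the realized cost of any feasible MS policy via $v_{it}\ge\underline q_i u_{it}$, $u_{it}\ge v_{it}/\overline q_i$, and $\sum_i v_{it}\ge \widetilde d_t$, then applying (A2) and (A4); and (ii) upper-bounding $z^{MS}$ by the single-generator policy $u_{j^*t}\equiv 1$, $v_{j^*t}=\widetilde d_t$ from Assumption~1. For TS the paper does exactly what you outline, replacing $\widetilde d_t$ by $d_t^{\max}$ because the deterministic commitment must satisfy $\sum_i\overline q_i u_{it}\ge \operatorname{ess\,sup}\widetilde d_t$, and then subtracts the two pairs of bounds. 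The paper is in fact briefer than you on the TS side (it only says ``a similar analysis'' after noting $\max\{\widetilde v^*_{it}\}\le\overline q_i u^*_{it}$), so the concern you flag---that the argument tacitly needs $d_t^{\max}$ to be the essential supremum of $\widetilde d_t$ rather than merely some upper bound---is a legitimate observation that the paper leaves implicit.
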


\begin{proof}
	Assumption \ref{as:recourse} implies that both TS and MS are feasible. Since the net loads are bounded due to Assumption \ref{as:boundeddemand}, both models have at least one optimal solution.
	
	Let  $\{\widetilde{\boldsymbol{u}}^*_t,\widetilde{\boldsymbol{v}}^*_t,\widetilde{\boldsymbol{w}}^*_t\}_{t \in \mathcal{T}}$ be an optimal policy obtained by  solving the multi-stage model MS. By Assumption~\ref{as:linearcost}, we have $f_t(\widetilde{\boldsymbol{u}}^*_t(\widetilde{d}_{[t]}),\widetilde{\boldsymbol{v}}^*_t(\widetilde{d}_{[t]}),\widetilde{\boldsymbol{w}}^*_t(\widetilde{d}_{[t]})) = \sum_{t \in \mathcal{T}} \sum_{i \in \mathcal{I}} a_{i}\widetilde{u}^*_{it}(\widetilde{d}_{[t]}) + b_{i}\widetilde{v}^*_{it}(\widetilde{d}_{[t]})$
	
	For a realization $d_1,d_2,\ldots,d_T$ of the random net load process $\widetilde{d}_1,\widetilde{d}_2,\ldots,\widetilde{d}_T$, let $[\boldsymbol{u}^*_t,\boldsymbol{v}^*_t] := [\widetilde{\boldsymbol{u}}^*_t,\widetilde{\boldsymbol{v}}^*_t](d_{[t]})$ be the optimal status and production decisions  for $t \in \mathcal{T}$. Then,  we have 
	\begin{align}
    & \sum_{t \in \mathcal{T}} \sum_{i \in \mathcal{I}}  a_{i}u^*_{it} + b_{i}v^*_{it}  \geq \sum_{t \in \mathcal{T}} \sum_{i \in \mathcal{I}}  a_{i}u^*_{it} + b_{i}\underline{q}_i u^*_{it}  \nonumber  \\     
	& = \sum_{t \in \mathcal{T}} \sum_{i \in \mathcal{I}}  (a_{i} + b_{i}\underline{q}_i)u^*_{it} \geq  \underset{i \in \mathcal{I}}{\min} \{a_{i} + b_{i}\underline{q}_i\} \sum_{t \in \mathcal{T}} \sum_{i \in \mathcal{I}} u^*_{it} \nonumber \\
	& \geq  \underset{i \in \mathcal{I}}{\min} \{a_{i} + b_{i}\underline{q}_i\} \sum_{t \in \mathcal{T}} \sum_{i \in \mathcal{I}} \frac{v^*_{it}}{\overline{q}_i}  \geq  \frac{	\underset{i \in \mathcal{I}}{\min} \{a_{i} + b_{i}\underline{q}_i\}}{\underset{i \in \mathcal{I}}{\max} \{\overline{q}_i\}} \sum_{t \in \mathcal{T}} \sum_{i \in \mathcal{I}}  v^*_{it} \nonumber \\ 
	& = a_*  \sum_{t \in \mathcal{T}} \sum_{i \in \mathcal{I}}  v^*_{it}  \geq a_*  \sum_{t \in \mathcal{T}} d_t \nonumber 
	\end{align}
where the first, third and fifth inequalities follow from feasibility. Since $\sum_{t \in \mathcal{T}} \sum_{i \in \mathcal{I}}  a_{i}u^*_{it} + b_{i}v^*_{it} \geq a_*  \sum_{t \in \mathcal{T}} d_t$ for any sample path  $d_1,d_2,\ldots,d_T$, we have $\sum_{t \in \mathcal{T}} \sum_{i \in \mathcal{I}}  a_{i}\widetilde{\boldsymbol{u}}^*_{it}(d_{[t]}) + b_{i}\widetilde{\boldsymbol{v}}^*_{it}(d_{[t]}) \succeq a_*  \sum_{t \in \mathcal{T}} \widetilde{d}_t = \alpha_{*}\widetilde{D}$.
Due to the monotonicity axiom (A2) and positive homogeneity axiom (A4), we get
\begin{align}
z^{MS}  & = \rho\left(\sum_{t \in \mathcal{T}} \sum_{i \in \mathcal{I}}  a_{i}\widetilde{u}^*_{ti}(d_{[t]}) + b_{i}\widetilde{v}^*_{ti}(d_{[t]}) \right) \nonumber \\ 
& \geq \rho(\alpha_{*}\widetilde{D}) = \alpha_{*} \rho(\widetilde{D}). \nonumber
\end{align}
Next, we consider a feasible policy $\{\widehat{\boldsymbol{u}}^*_t,\widehat{\boldsymbol{v}}^*_t,\widehat{\boldsymbol{w}}^*_t\}_{t \in \mathcal{T}}$ to the multi-stage model where $\widehat{u}_{j^*t}(\widetilde{d}_{[t]})=1$, $\widehat{v}_{j^*t}(\widetilde{d}_{[t]})=\widetilde{d}_t$ and all other status and generation variables are set to zero for a sample path $d_1,d_2,\ldots,d_t$. The feasibility of the solution is guaranteed by Assumption \ref{as:recourse}. Then, 
\begin{align}
& z^{MS}  \leq  \rho \left(  \sum_{t \in\mathcal{T}} \sum_{i \in\mathcal{I}} a_{i}\widehat{u}_{it}(\widetilde{d}_{[t]}) + b_{i}\widehat{v}_{it}(\widetilde{d}_{[t]}) \right) \nonumber \\ 
& = \rho \left( \sum_{t \in\mathcal{T}} a_{j^*}\widehat{u}_{j^*t}(\widetilde{d}_{[t]}) + b_{j^*}\widehat{v}_{j^*t}(\widetilde{d}_{[t]})\right)  = \rho \left( \sum_{t \in\mathcal{T}} a_{j^*} + b_{j^*} \widetilde{d}_{t}\right) \nonumber \\
& = \rho \left( \sum_{t \in\mathcal{T}} \frac{a_{j^*} + b_{j^*} \widetilde{d}_{t}}{\widetilde{d}_{t}}\widetilde{d}_{t} \right) \leq \rho \left( \sum_{t \in\mathcal{T}} \frac{a_{j^*} + b_{j^*} \overline{q}_{j^*}}{\underline{q}_{j^*}}\widetilde{d}_{t} \right) \nonumber \\
& \leq \frac{ \underset{i \in \mathcal{I}}{\max} \{a_{i} + b_{i} \overline{q}_i\}}{\underset{i \in \mathcal{I}}{\min}\{\underline{q}_i\}} \rho \left( \sum_{t \in\mathcal{T}} \widetilde{d}_{t} \right)  = \alpha^* \rho \left( \sum_{t \in\mathcal{T}} \widetilde{d}_{t} \right)  \leq \alpha^* \rho (\widetilde{D}) \nonumber
\end{align}
where the first inequality follows from feasibility, the second inequality follows from Assumption \ref{as:recourse} and the third equality follows from axiom (A4) and the definition of $\alpha^{*}$. Thus, we get lower and upper bounds for the multi-stage problems, that is, 
\begin{equation}\label{multibounds}
\alpha_* \rho(\widetilde{D}) \leq z^{MS} \leq  \alpha^* \rho(\widetilde{D}). 
\end{equation}
Note that in the two-stage model, the status decisions in period $t \in \mathcal{T}$ is identical for all realizations of problem parameters in that period and satisfies $\max\{\widetilde{v}^*_{it}(\widetilde{d}_{[t]})\} \leq \overline{q}u^*_{it}$. Then, using this fact, a similar analysis can be used to obtain lower and upper bounds for the two-stage model and we get
\begin{equation}\label{twobounds}
\alpha_* D^{\max} \leq z^{TS} \leq  \alpha^* D^{\max}. 
\end{equation}
The claim of the theorem  follows from (\ref{multibounds}) and (\ref{twobounds}).  
\end{proof} 

If the generators are almost identical and lower and upper production limits are close enough, we have $\alpha_{*} \approx \alpha \approx \alpha^*$. Then, we have
\begin{equation}\label{almost}
	\text{VMS} \approx \alpha(D^{\max} - \rho(\widetilde{D})).
\end{equation}
Note that $ 0 \leq \rho(\widetilde{D}) \leq D^{\max}$ and the approximation (\ref{almost}) implies that the VMS increases with $D^{\max}$ and therefore variability in the net load. However, for fixed variability, the VMS decreases with $\rho(\widetilde{D})$ and therefore the degree of risk aversion.

Assume that the net load in period $t \in \mathcal{T}$ is $\widetilde{d}_t = \overline{d}_{t} + \mathcal{U}[-\Delta,\Delta]$ where $\overline{d}_{t}$ is a deterministic value and $\mathcal{U}[-\Delta,\Delta]$ is an error term uniformly distributed between $-\Delta$ and $\Delta$ for some $\Delta \in \mathbb{R}_+$. Also assume that the composite risk measure $\rho(\cdot)$ is obtained using conditional mean-upper semi deviation as given in (\ref{musd}). Then, 

\begin{align}
    \text{VMS} & \approx  \alpha(D^{\max} - \rho(\widetilde{D})) \nonumber \\ 
     & =  \alpha \left( \sum_{t = 1}^T d^{\max}_t - \rho\left(\sum_{t = 1}^T \widetilde{d}_t\right)\right) \nonumber \\
     & =   \alpha T \left(1-\frac{\lambda}{4}\right) \Delta \label{final}
\end{align}
where the second equality follows from definitions of $d^{\max}_t$, $\widetilde{d}_t$ and evaluation of mean-upper semi deviation risk measure $\rho(\cdot)$. The approximation in (\ref{final}) suggests that the VMS increases with the number of periods $T$ and the variability in the net load $\Delta$. However, VMS decreases with the degree of risk aversion $\lambda$.

\section{Computational Experiments} \label{sec:computation}
The analytical result of the previous section rely on restrictive assumptions to simplify  the structure of the RA-UC problem. In order to see how the VMS behave in the absence of these assumptions, we conduct a set of computational experiments next.

We consider a power system with 10 generators in the computational experiments. We use the data set presented in \cite{kazarlis1996genetic} with some modifications. We also consider a random net load process with eight scenarios where the power demand at each hour is subject to uncertainty. The scenario tree depicting the random process  is given \figurename{\ref{stree}}. A similar scenario tree structure is used in \cite{shiina2004stochastic}. 
\begin{figure}[!h]
	\centering
	\caption{Scenario tree \label{stree}}
	\includegraphics[width=0.35\textwidth]{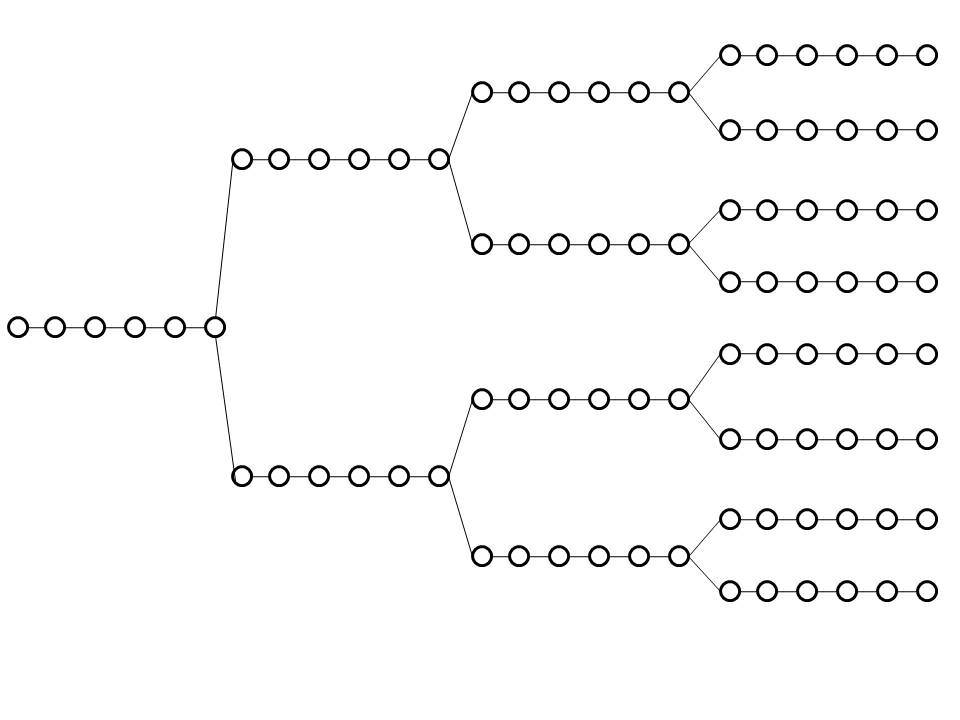}	
\end{figure}

The test data is presented in Appendix \ref{app:data}. We use the base demand values presented in Table \ref{tab:dem} to generate random demands. A variability parameter $\epsilon$ is used to control the dispersion of demand across all scenarios. Demand values for each scenario are presented in Table \ref{tab:sce}. All other parameters are set to the values presented in Table \ref{tab:gen}. A PC with two 2.2GHz processors and 6 GB of RAM is used in the computational experiments.

The quadratic production cost functions $\{g_i(\cdot)\}_{i \in \mathcal{I}}$ are approximated by a piecewise linear cost function with four pieces of equal lengths. We use a conditional mean-upper semi deviation risk measure (\ref{musd}) in each period.  The conditional risk measures $\rho_{1}(\cdot),\rho_2(\cdot),\ldots,\rho_{T-1}(\cdot)$, the dynamic coherent risk measure $\varrho(\cdot)$ and the composite risk measure $\rho(\cdot)$ are defined accordingly. 

We model and solve the two-stage model TS and the multi-stage model MS for five different values of variability parameter $\epsilon$ and six different values of the penalty parameter $\lambda$. For each $\epsilon$ and $\lambda$ pair, we calculate VMS in terms of difference of optimal values, that is,
\begin{equation*}
\text{VMS (\$)} = z^{TS} - z^{MS},
\end{equation*}
and in terms of percentage 
\begin{equation*}
\text{VMS (\%)} = \frac{z^{TS} - z^{MS}}{z^{MS}},
\end{equation*}
The results on the VMS are presented in \figurename{\ref{fig:VMS}}. 
\begin{figure*}
	\caption{\small Results of the computational experiments on the VMS. \label{fig:VMS}} 	
	\centering
	\begin{subfigure}[b]{0.485\textwidth}
		\centering
		\includegraphics[width=\textwidth]{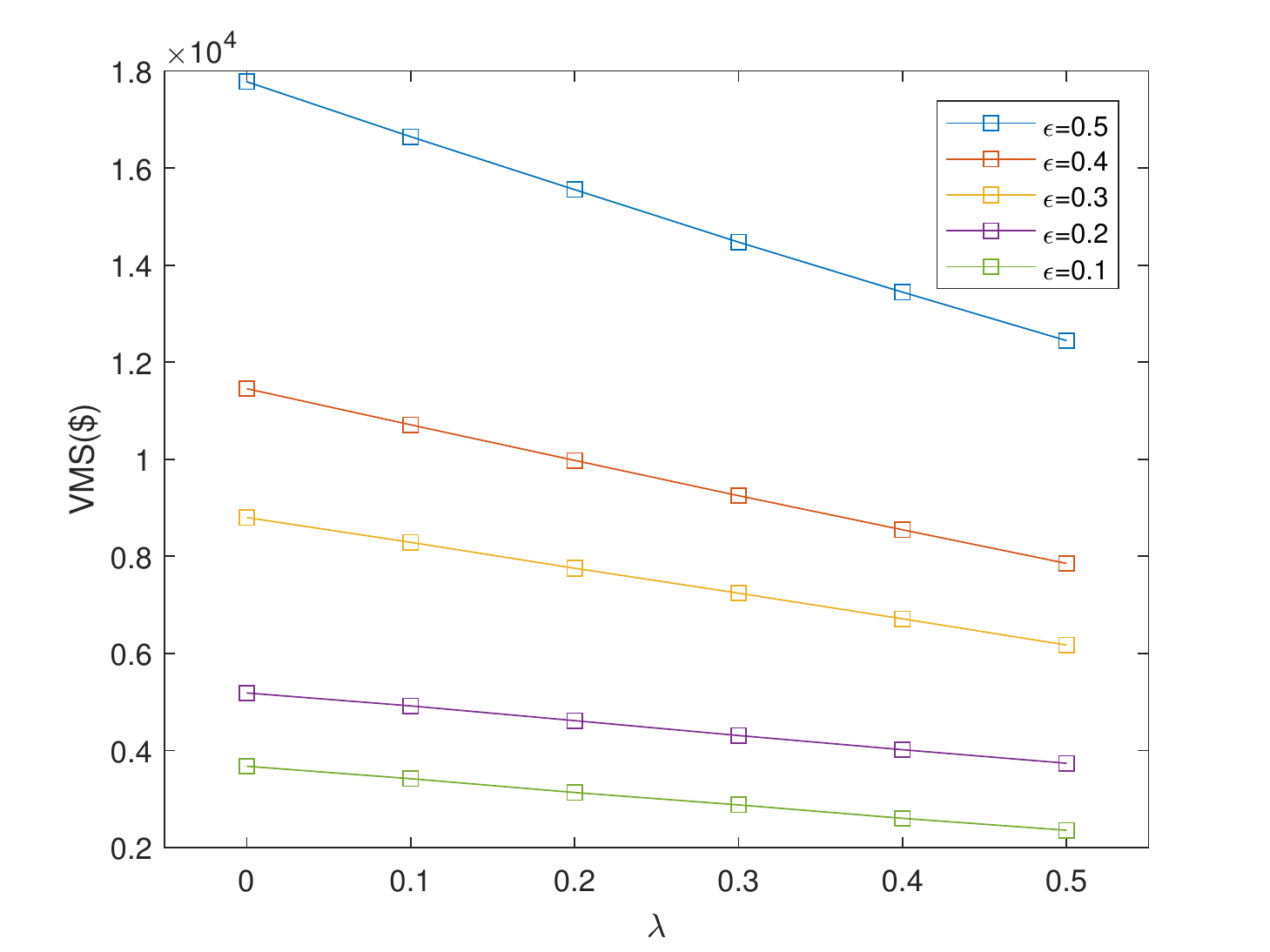}
	\end{subfigure}
	\hfill
	\begin{subfigure}[b]{0.485\textwidth}  
		\centering 
		\includegraphics[width=\textwidth]{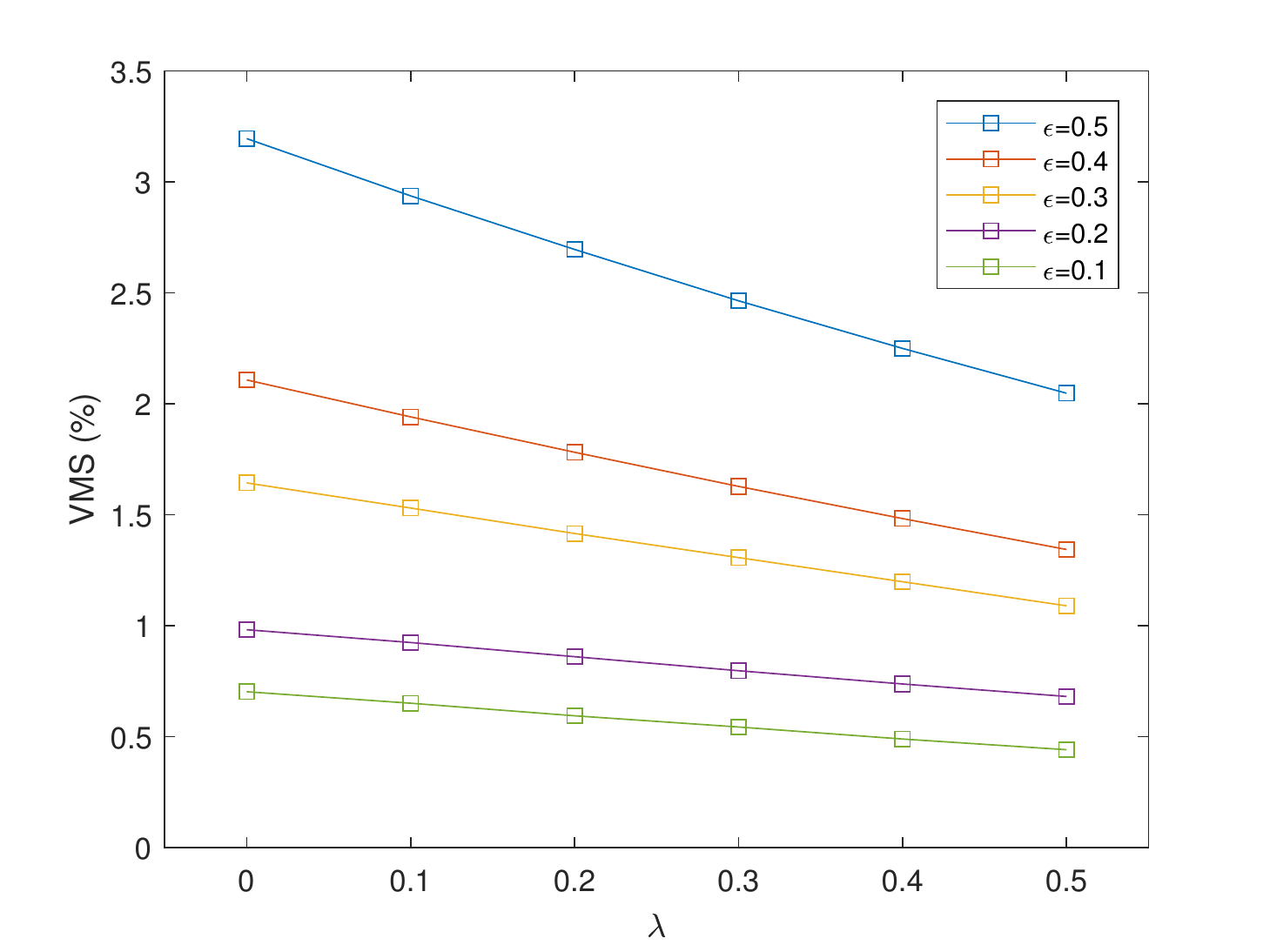}
	\end{subfigure}
\end{figure*}

 \figurename{\ref{fig:VMS}} verifies our analytical findings on  VMS.  We observe an increase in  VMS with the uncertainty in net load values. The VMS and hence importance of the multi-stage model increases as the dispersion among the scenarios increases. As expected, the day-ahead schedule obtained by solving the multi-stage model is more adaptive and provides more flexibility in case of high variability of problem parameters. 
We also observe decrease in the VMS with the level of risk aversion. In parallel with the analytical results in Theorem \ref{thepro}, higher risk aversion leads lower VMS. Hence, the importance of the multi-stage model decreases as risk aversion increases.

We also consider a rolling horizon policy obtained by solving two-stage approximations to the multi-stage problem in each period and fixing the decisions at that stage with respect to the optimal solution of the two-stage model.  In order to the measure the quality of the rolling horizon policy, we calculate the gap between the value of the rolling horizon policy and the optimal value of MS. The gap value GAP is calculated in terms of difference of objective values
\begin{equation*}
\text{GAP (\$)} = z^{RH} - z^{MS},
\end{equation*}
and in terms of percentage 
\begin{equation*}
\text{GAP (\%)} = \frac{z^{RH} - z^{MS}}{z^{MS}}.
\end{equation*}
where $z^{RH}$ is the value of the rolling horizon policy. Note that since rolling horizon provides a feasible policy to the multistage problem that is at least as good as that of TS, we have that $0 \leq \text{GAP}  \leq\text{VMS} $.  The results are presented in \figurename{\ref{fig:GAP}}. 

\begin{figure*}
	\centering
		\caption{\small Results of the computational experiments on GAP. \label{fig:GAP}} 
	\begin{subfigure}[b]{0.485\textwidth}
		\centering
		\includegraphics[width=\textwidth]{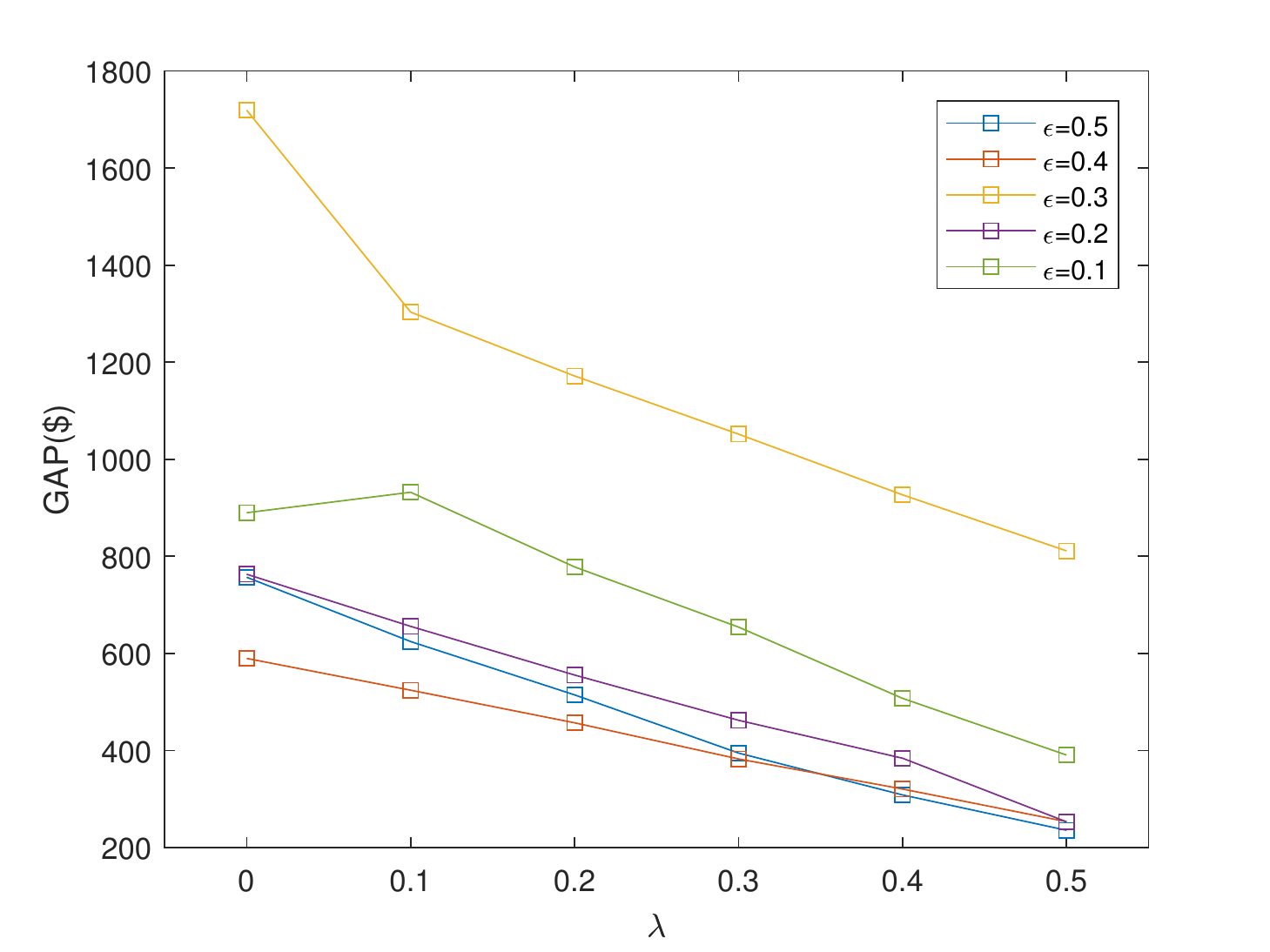}
	\end{subfigure}
	\hfill
	\begin{subfigure}[b]{0.485\textwidth}  
		\centering 
		\includegraphics[width=\textwidth]{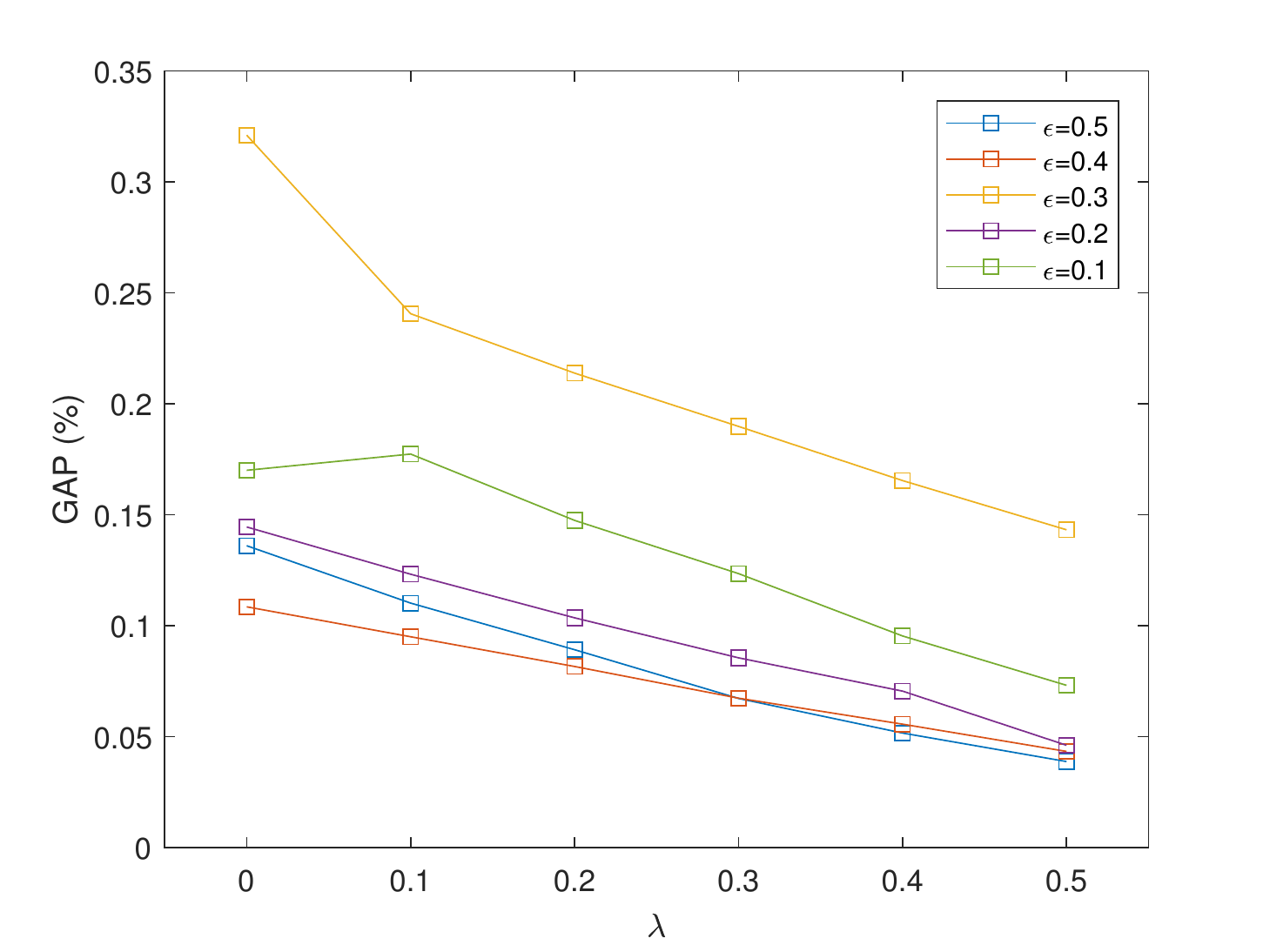}
	\end{subfigure}	
\end{figure*}

We present the solution times for each TS and MS instance at Table \ref{tab:timets} and Table \ref{tab:timems}, respectively. The required time to obtain the rolling horizon policy is also presented in Table \ref{tab:timerh}.

\begin{table}[!h]
	\centering 
	\caption{Solution times of TS (in seconds)}
	\begin{tabular}{c|cccccc} 
		$\epsilon \backslash  \lambda$   & 0     & 0.1   & 0.2   & 0.3   & 0.4   & 0.5 \\ \hline
		0.1   & 7.5  & 10.4 & 9.6  & 7.7  & 7.2  & 7.2 \\
		0.2   & 4.2  & 3.8  & 3.5  & 4.0  & 3.7  & 3.2 \\
		0.3   & 12.2 & 10.9 & 9.5  & 8.1  & 7.8  & 6.0 \\
		0.4   & 7.9  & 3.8  & 4.1  & 4.0  & 3.3  & 2.7 \\
		0.5   & 8.8  & 5.4  & 6.3  & 4.8  & 4.8  & 4.6 \\
	\end{tabular}%
	\label{tab:timets}%
\end{table}
\begin{table}[!h]
	\centering
	\caption{Solution times of MS (in seconds)}
	\begin{tabular}{c|cccccc} 
		$\epsilon \backslash  \lambda$   & 0     & 0.1   & 0.2   & 0.3   & 0.4   & 0.5 \\ \hline
		0.1   & 1004.2 & 1280.0 & 1255.2 & 1489.7 & 1789.6 & 2009.1 \\
		0.2   & 328.3  & 381.6  & 400.4  & 444.7 & 324.6 & 393.8 \\
		0.3   & 480.0  & 1042.4 & 435.8  & 780.0 & 453.8 & 358.5 \\
		0.4   & 192.9  & 674.5  & 529.4  & 323.0 & 328.6 & 279.8 \\
		0.5   & 85.7   & 147.5  & 116.6  & 119.0 & 118.5 & 113.1 \\
	\end{tabular}%
	\label{tab:timems}%
\end{table}
\begin{table}[!h]
	\centering
	\caption{Required time to obtain the rolling horizon policy (in seconds)}
	\begin{tabular}{c|cccccc} 
		$\epsilon \backslash  \lambda$   & 0     & 0.1   & 0.2   & 0.3   & 0.4   & 0.5 \\ \hline
	0.1	&	16.6	&	15.1	&	14.7	&	13.6	&	14.9	&	12.8	\\
	0.2	&	8.0	&	9.0	&	9.0	&	8.7	&	8.0	&	8.5	\\
	0.3	&	15.1	&	17.3	&	15.1	&	15.2	&	14.6	&	11.4	\\
	0.4	&	9.0	&	10.4	&	8.3	&	9.1	&	7.7	&	7.8	\\
	0.5	&	10.2	&	9.6	&	9.0	&	12.3	&	9.7	&	9.5	\\
	
	\end{tabular}%
	\label{tab:timerh}%
\end{table}

In all instances, the rolling horizon policy performs much better than the policy obtained by solving the two-stage problem with a small increase in computational effort. The $\text{GAP (\%)}$ of rolling horizon policy is $0.12\%$ on the average (with maximum $0.32\%$) whereas the $\text{VMS (\%)}$ is $1.42\%$ on the average (with maximum $3.20\%$). Thus, the rolling horizon policy obtained by using two-stage approximations to the multi-stage solution can provide enough flexibility in generation schedule to obtain a near-optimal schedule in RA-UC problems with a reasonable computational effort. 

The computational effort to solve the MS model is much larger than that of the TS model and the rolling horizon policy in all instances. The higher the demand variability leads higher VMS while decreasing the solution times as an additional benefit. 

\section{Conclusion} \label{sec:conclusion}
Recent improvements in the renewable power production technologies have motivated the stochastic unit commitment problems, since these models can explicitly address the variability in net load. Multi-stage models  provide completely flexible schedules where all decisions are adapted to the uncertainty. However, these models require high computational effort, and therefore, their two-stage counterparts are used to obtain approximate policies. In order to justify the additional effort to solve the multi-stage model rather than its two-stage counterpart, we define the VMS and provide analytical and computational results on it. These results reveal that, for RA-UC problems, the VMS decreases with the degree of risk aversion, and increases with the level of uncertainty and number of time periods.  

Performance of the rolling horizon polices obtained by two-stage approximations of the multi-stage models are promising. As a future research direction, it would be interesting to consider the rolling horizon policies in instances with more complicated random net load processes. However, in that case, the number of two-stage models to be solved would be large and their solution would require significant computation time.  Theoretical analysis of the value of rolling horizon policies is also an important future step.

\appendices
\section{Deterministic Unit Commitment Formulation} \label{app:model}
\emph{Indexes and Sets} 
\begin{align}
t: \; & \text{Period  index, } & i: \; & \text{Generator index}, \nonumber\\
T: \; & \text{Number of periods, } & I: \; & \text{Number of generators}, \nonumber\\
\mathcal{T}: \; & \text{Set of periods, } & \mathcal{I}: \; & \text{Set of generators}, \nonumber
\end{align}
\emph{Parameters} 
\begin{align}
a_{i}: \; & \text{Fixed cost of running generator } i \in \mathcal{I}, \nonumber \\
g_{i}(\cdot): \; & \text{Production cost function of running generator } i \in \mathcal{I},  \nonumber \\
&  \text{ specifically, } g_{i}(v) = b_{i}v+c_{i}v^2  \text{ for } v \geq 0 \nonumber \\
&  \text{ with parameters } b_i,c_i \in \mathbb{R}_+, \nonumber \\
SU_{i}: \; & \text{Start-up cost of generator } i \in \mathcal{I}, \nonumber \\
SD_{i}: \; & \text{Shut-down cost of generator } i \in \mathcal{I}, \nonumber \\
\underline{q}_{i}: \; & \text{Minimum production amount of generator } i \in  \mathcal{I}, \nonumber \\
\overline{q}_{i}: \; & \text{Maximum production amount of generator } i \in  \mathcal{I}, \nonumber \\
d_{t}: \; & \text{Net load in period } t \in \mathcal{T}, \nonumber \\
M_i: \; & \text{Minimum up time of generator } i \in  \mathcal{I}, \nonumber \\
L_i: \; & \text{Minimum down time of generator } i \in  \mathcal{I}, \nonumber \\
V'_i: \; & \text{Start up rate of generator } i \in \mathcal{I}, \nonumber \\
V_i: \; & \text{Ramp up rate of generator } i \in  \mathcal{I}, \nonumber \\
B'_i: \; & \text{Shut down rate of generator } i \in \mathcal{I}, \nonumber \\
B_i: \; & \text{Ramp down production limit of generator } i \in  \mathcal{I}.  \nonumber 
\end{align}
\emph{Variables}
\begin{align}
u_{it}: \; &  \text{Status of generator } i \in \mathcal{I} \text{ in period } t \in \mathcal{T},  \nonumber \\ 
\; & (1  \text{ if generator $i$ is ON in period $t$; } 0 \text{ otherwise}),  \nonumber \\
v_{it}: \; &  \text{Production amount of generator } i \in \mathcal{I} \text{ in period } t \in \mathcal{T}, \nonumber \\
y_{it}: \; &  \text{Start up decision of generator } i \in  \mathcal{I}  \text{ in period } t \in \mathcal{T},  \nonumber \\ 
&    (1  \text{ if } u_{i(t-1)} = 0  \text{ and } u_{it} = 1 \text{; } 0 \text{ otherwise}), \nonumber\\
z_{it}: \; &  \text{Shut down decision  of generator } i \in  \mathcal{I}  \text{ in period } t \in \mathcal{T},  \nonumber \\ 
&   (1  \text{ if } u_{i(t-1)} = 1  \text{ and } u_{it} = 0 \text{; } 0 \text{ otherwise}). \nonumber
\end{align}
\emph{Model}\\
\begin{align}
\underset{u, v, y ,z}{\text{min}} \; &  \sum_{t = 1}^T \sum_{i = 1}^I a_{i}u_{it} + g_{t}(v_{it}) + SU_{i}y_{it} + SD_{i}z_{it}, \label{deter:obj} \\
\text{s.t.} \; &  (\ref{det-dem}), (\ref{det-cap})  \nonumber\\
& u_{it}-u_{i(t-1)} \leq u_{i \tau}, \;   \forall t \in \mathcal{T},  \forall i \in \mathcal{I}, \nonumber   \\
& \quad \forall \tau \in \{t+1,\ldots,\min\{t+M_i,T\}\} \label{deter:upt}   \\
& u_{i(t-1)} - u_{it} \leq 1 - u_{i\tau}, \;  \forall t \in \mathcal{T}, \forall i \in \mathcal{I}, \nonumber \\
& \quad \forall \tau \in \{t+1,\ldots,\min\{t+L_i,T\}\} \label{deter:dot}   \\
& u_{it}-u_{i(t-1)} \leq y_{it}, \;  \forall t \in \mathcal{T}, \forall i \in \mathcal{I} \label{deter:sup}  \\
& u_{i(t-1)}-u_{it} \leq z_{it} , \;  \forall t \in \mathcal{T}, \forall i \in \mathcal{I} \label{deter:sdo}  \\
& v_{it}-v_{i(t-1)} \leq V'_i y_{it} + V_i u_{i(t-1)},  \nonumber \\
& \quad  \forall t \in \mathcal{T}, \forall i \in \mathcal{I} \label{deter:rup}  \\
& v_{i(t-1)}-v_{it} \leq B'_i z_{it} + B_i u_{it}, \nonumber \\
& \quad \forall t \in \mathcal{T}, \forall i \in \mathcal{I} \label{deter:rdo}  \\
& u_{it}, y_{it}, z_{it} \in \{0,1\}, v_{ti} \geq 0, \;  \forall t \in \mathcal{T}, \forall i \in \mathcal{I}. \nonumber 
\end{align}
The objective (\ref{deter:obj}) is total fixed, production, start up and shut down costs. Constraints (\ref{deter:upt}), (\ref{deter:dot}), (\ref{deter:sup}) and (\ref{deter:sdo}) are minimum up time, minimum down time, start up and  shut down constraints, respectively. The rump/start up rate constraint is given in (\ref{deter:rup}). Similarly, (\ref{deter:rdo}) is the rump/shut down rate constraint. 

\section{Computational Experiment Data} \label{app:data}
\begin{table}[!h]
	\centering
	\caption{Demand Data (MW = megawatt)}
	\begin{tabular}{|c|c|c|c|c|c|c|} \hline
		$t$  & 1     & 2     & 3     & 4     & 5     & 6     \\ \hline
		$\overline{d}_t$ (MW) & 700   & 750   & 850   & 950   & 1000  & 1100  \\ \hline
		$t$  & 7     & 8     & 9     & 10    & 11    & 12 \\ \hline
		$\overline{d}_t$ (MW)  & 1150  & 1200  & 1300  & 1400  & 1450  & 1500 \\ \hline
		$t$  & 13    & 14    & 15    & 16    & 17    & 18    \\ \hline
		$\overline{d}_t$  (MW) & 1400  & 1300  & 1200  & 1050  & 1000  & 1100 \\ \hline
		$t$  & 19    & 20    & 21    & 22    & 23    & 24 \\ \hline
		$\overline{d}_t$  (MW) & 1200  & 1400  & 1300  & 1100  & 900   & 800 \\ \hline
	\end{tabular}%
	\label{tab:dem}%
\end{table}
\begin{table}[!h]
	\centering
	\caption{Scenario Data}
	\begin{tabular}{cccccc}
		&       & \multicolumn{4}{c}{Period (or hour) $t$} \\ \cline{3-6}
		\multicolumn{1}{l}{Scenario} & \multicolumn{1}{l}{Probability} &   1-6    & 7-12    & 13-18      & 19-24  \\ \hline
		1     & 0.125 & $\overline{d}_t$     & $(1-\epsilon)\overline{d}_t$     & $(1-\epsilon)\overline{d}_t$     & $(1-\epsilon)\overline{d}_t$  \\
		2     & 0.125 & $\overline{d}_t$     & $(1-\epsilon)\overline{d}_t$     & $(1-\epsilon)\overline{d}_t$     & $(1+\epsilon)\overline{d}_t$ \\
		3     & 0.125 & $\overline{d}_t$     & $(1-\epsilon)\overline{d}_t$     & $(1+\epsilon)\overline{d}_t$     & $(1-\epsilon)\overline{d}_t$  \\
		4     & 0.125 & $\overline{d}_t$     & $(1-\epsilon)\overline{d}_t$     & $(1+\epsilon)\overline{d}_t$     & $(1+\epsilon)\overline{d}_t$ \\
		5     & 0.125 & $\overline{d}_t$     & $(1+\epsilon)\overline{d}_t$     & $(1-\epsilon)\overline{d}_t$     & $(1-\epsilon)\overline{d}_t$  \\
		6     & 0.125 & $\overline{d}_t$     & $(1+\epsilon)\overline{d}_t$     & $(1-\epsilon)\overline{d}_t$     & $(1+\epsilon)\overline{d}_t$ \\
		7     & 0.125 & $\overline{d}_t$     & $(1+\epsilon)\overline{d}_t$     & $(1+\epsilon)\overline{d}_t$     & $(1-\epsilon)\overline{d}_t$  \\
		8     & 0.125 & $\overline{d}_t$     & $(1+\epsilon)\overline{d}_t$     & $(1+\epsilon)\overline{d}_t$     & $(1+\epsilon)\overline{d}_t$ \\
	\end{tabular}%
	\label{tab:sce}%
\end{table}%
\begin{table}[!h]
	\centering
	\caption{Generator Data (MW = megawatt)}
	\begin{tabular}{|c|c|c|c|c|c|} \hline
		$i$  & 1     & 2     & 3     & 4     & 5      \\ \hline 
		$a_i$ (\$/h)  & 1000  & 970   & 700   & 680   & 450   \\ \hline
		$b_i$ (\$/MWh)  & 16.19 & 17.26 & 16.6  & 16.5  & 19.7  \\ \hline
		$c_i$ (\$/MW$^{2}$h)   & 0.00048 & 0.00031 & 0.002 & 0.00211 & 0.00398  \\ \hline
		$\overline{q}_i$ (MW) & 682.5	&	682.5	&	195	&	195	&	243			\\ \hline
		$\underline{q}_i$ (MW)  & 225	&	225	&	30	&	30	&	37.5			\\ \hline
		$V'_i$ (MW)  & 337.5	&	337.5	&	45	&	45	&	56.25			\\ \hline
		$V_i$ (MW)  & 405	&	405	&	54	&	54	&	67.5	\\ \hline
		$B'_i$ (MW)  & 337.5	&	337.5	&	45	&	45	&	56.25		\\ \hline
		$B_i$ (MW)  & 405	&	405	&	54	&	54	&	67.5	
		\\ \hline
		$M_i$ (h) & 8     & 8     & 5     & 5     & 6     \\ \hline
		$L_i$ (h) & 8     & 8     & 5     & 5     & 6     \\ \hline
		$SU_{i}$ (\$/h) & 4500  & 5000  & 550   & 560   & 900   \\ \hline 
		$SD_{i}$ (\$/h) & 0  & 0  & 0   & 0   &  0   \\ \hline\hline
		$i$    & 6     & 7     & 8     & 9     & 10 \\ \hline 
		$a_i$ (\$/h)   & 370   & 480   & 660   & 665   & 670 \\ \hline
		$b_i$ (\$/MWh)   & 22.26 & 27.74 & 25.92 & 27.27 & 27.79 \\ \hline
		$c_i$ (\$/MW$^{2}$h)   & 0.00712 & 0.00079 & 0.00413 & 0.00222 & 0.00173 \\ \hline
		$\overline{q}_i$ (MW) 	&	120	&	127.5	&	82.5	&	82.5	&	82.5
		\\ \hline
		$\underline{q}_i$ (MW) 	&	30	&	37.5	&	15	&	15	&	15
		\\ \hline
		$V'_i$ (MW)  	&	45	&	56.25	&	22.5	&	22.5	&	22.5
		\\ \hline
		$V_i$ (MW) 	&	54	&	67.5	&	27	&	27	&	27
		\\ \hline
		$B'_i$ (MW) 	&	45	&	56.25	&	22.5	&	22.5	&	22.5
		\\ \hline
		$B_i$ (MW)  	&	54	&	67.5	&	27	&	27	&	27\\ \hline
		$M_i$ (h)     & 3     & 3     & 1     & 1     & 1 \\ \hline
		$L_i$ (h)     & 3     & 3     & 1     & 1     & 1 \\ \hline
		$SU_i$ (\$/h)   & 170   & 260   & 30    & 30    & 30 \\ \hline 
		$SD_i$ (\$/h)   & 0   & 0   & 0    & 0    & 0 \\ \hline
	\end{tabular}%
	\label{tab:gen}%
\end{table}%

\begin{IEEEbiographynophoto}{Ali~\.{I}rfan~Mahmuto\u{g}ullar{\i}}
	is a Ph.D. candidate in Department of Industrial Engineering, Bilkent University, Ankara, Turkey. He holds B.S. and M.S. degrees from the same department in 2011 and 2013, respectively. His research focuses on developing efficient solution methods for multi-stage mixed-integer stochastic programming models. He is also interested in application of these models to the problems emerging from different areas of operations research. 
\end{IEEEbiographynophoto}

\begin{IEEEbiographynophoto}{Shabbir~Ahmed} is the Anderson-Interface Chair and Professor in the H. Milton Stewart School of Industrial \& Systems Engineering at the Georgia Institute of Technology. His research interests are in stochastic and discrete optimization. Dr. Ahmed is a past Chair of the Stochastic Programming Society and serves on the editorial board of several journals. His honors include the INFORMS Computing Society Prize, the National Science Foundation CAREER award, two IBM Faculty Awards, and the INFORMS Dantzig Dissertation award. He is a Fellow of INFORMS.\end{IEEEbiographynophoto}

\begin{IEEEbiographynophoto}{{\"O}zlem~{\c{C}}avu{\c{s}}}
	is currently an Assistant Professor of Industrial Engineering at Bilkent University. She received her B.S. and M.S. degrees in Industrial Engineering from Boğaziçi University in 2004 and 2007, respectively, and the Ph.D. degree in Operations Research from Rutgers Center for Operations Research (RUTCOR) at Rutgers University in 2012. Her research interests include stochastic optimization, risk-averse optimization and Markov decision processes.
\end{IEEEbiographynophoto}

\begin{IEEEbiographynophoto}{M.~Selim~Akt{\"u}rk}
	is a Professor and Chair of the Department of Industrial Engineering at Bilkent University. His recent research interests are discrete optimization, production scheduling and airline disruption management.
\end{IEEEbiographynophoto}





\vfill

\end{document}